\documentclass[a4paper,11pt,final,leqno,notitlepage]{article}

\usepackage{amsmath}
\usepackage{amsfonts}
\usepackage{amssymb}
\usepackage{amsthm} 

\usepackage{geometry}
 \geometry{a4paper,
left=20mm,
right=20mm,
 top=15mm,
 bottom=18mm
 }

\usepackage[blocks]{authblk}

\usepackage[T1]{fontenc}

\usepackage{fullpage}

\usepackage{enumerate}
\usepackage{enumitem}

\begin{document}
\renewcommand{\thepage}{\small\arabic{page}}
\renewcommand{\thefootnote}{(\arabic{footnote})}
\renewcommand{\thesection}{\arabic{chapter}.\arabic{section}}
\renewcommand{\thesubsection}{\arabic{subsection}}

\renewcommand\Affilfont{\small}

\author{Maria Trybu\l{}a}
\affil{Faculty of Mathematics and Informatics\\Adam Mickiewicz University in Pozna\'{n} \\ Uniwersytetu Pozna\'{n}skiego 4, 61-614 Pozna\'{n}, Poland\\ mtrybula@amu.edu.pl}
\affil{Institute of Mathematics and Informatics\\Bulgarian Academy of Sciences\\ Akad. Georgi Bonchev 8, 1113 Sofia, Bulgaria\\ maria.h.trybula@gmail.com}

\title{Multipliers on Spaces of Holomorphic Functions on Runge Domains in $\mathbb{C}^n$}
\maketitle

\begin{abstract}
We investigate multipliers on the space of holomorphic functions $H(\Omega)$, where $\Omega \subset \mathbb{C}^n$ is an open set. For Runge domains, we characterize these multipliers as convolutions with analytic functionals. Additionally, we explore Cartesian product domains, providing a representation of multipliers through germs of holomorphic functions. Finally, we identify the appropriate topology for analytic functionals, establishing a topological isomorphism with multipliers by utilizing the topology of uniform convergence on bounded sets inherited from the space of endomorphisms on $H(\Omega)$.
 \footnotetext[1]{{\em 2020 Mathematics Subject Classification.}
Primary: 47B91, 46F15, 44A35, 32D15, 32A10.

{\em Key words and phrases:} Runge domain, Holomorphic function, Multiplier, Analytic functional.

The research was supported by National Science Fund (Bulgaria), grant no. KP-06-N82/6.}

\end{abstract}
\newtheorem{thm}{Theorem}[subsection]
\newtheorem{lemma}[thm]{Lemma}
\newtheorem{proposition}[thm]{Proposition}
\newtheorem{corollary}[thm]{Corollary}
\newtheorem{remark}[thm]{Remark}
\newtheorem*{example}{Example}

\subsection{Introduction}
We investigate multipliers on the space of holomorphic functions $H(\Omega)$, where $\Omega\subset\mathbb{C}^n$ is a Runge domain. A multiplier is a linear continuous map $M:H(\Omega)\to H(\Omega)$ such that every monomial is an eigenvector of $M$. Throughout this paper, $\Omega$ denotes an arbitrary Runge domain in $\mathbb{C}^n$. Recall that a domain $\Omega$ is called Runge if every holomorphic function on $\Omega$ can be uniformly approximated on every compact subset of $\Omega$ by polynomials. We denote by $\mathsf{M}(\Omega)$ the set of all multipliers acting on $H(\Omega)$. The sequence of eigenvalues $(m_\alpha)_{\alpha\in\mathbb{N}^n}$ associated with a multiplier $M$ is referred to as its multiplier sequence.

Extensive research has been conducted on multipliers acting on spaces of holomorphic functions in one complex variable (see, e.g., \cite{B}, \cite{BM}, \cite{F}, \cite{M1}, \cite{M2}, \cite{MP1}, \cite{T1}, and the references therein). In particular, \cite{T2,T1} examined the one-variable case, and in this work, we develop a higher-dimensional generalization for Runge domains in $\mathbb{C}^n$. Multipliers on spaces of real analytic functions have garnered recent attention, with studies published in \cite{D1}, \cite{D2}. Taylor coefficient multipliers belong to a classical area of research, tracing back to Hadamard's early exploration of these operators in \cite{Hadamard}. The celebrated Hadamard multiplication theorem is, in fact, intertwined with multipliers (see the survey article \cite{R}).  

Our first major result is the characterization of all multipliers $\mathsf{M}(\Omega)$ in terms of analytic functionals (see Theorem 3.1). For product domains, i.e., when $\Omega$ is a Cartesian product of open planar sets, we establish a more refined description. Specifically, we express multipliers in terms of appropriate germs of holomorphic functions and examine the continuity of the relevant linear mappings (see Theorems 4.1, 4.2, and 4.4).   

Notation and auxiliary results are provided in Section 2.

\subsection{Preliminaries}
We begin by introducing basic concepts and notation that lay the groundwork for the rest of this study. Given a set $A\subset\mathbb{C}$, we define $A_*:=A\setminus\{0\}$. The symbols $\mathbb{N}$, $\mathbb{R}_{>0}$, $\mathbb{C}$, $\hat{\mathbb{C}}$, and $\mathcal{N}$ denote the set of natural numbers including zero, the set of positive real numbers, the complex plane, the Riemann sphere, and the set $\mathbb{C}^n\setminus\mathbb{C}_*^n$, respectively.

We denote by $\mathbb{D}(a,r)$ the open disk with center $a$ and radius $r$. When $a=0$, we write $\mathbb{D}(r)$. For $a\in\mathbb{C}^n$ and $r\in\mathbb{R}_{>0}^n$, the polydisc with center $a$ and polyradius $r$ is given by  
\[
\mathbb{P}_n(a,r) := \mathbb{D}(a_1,r_1)\times\ldots\times\mathbb{D}(a_n,r_n).
\]

For an open set of the form $\Omega = \Omega_1 \times \dots \times \Omega_n$ with each $\Omega_j\subset\mathbb{C}$, the distinguished boundary is defined as  
\[
\partial_0\Omega := \partial\Omega_1\times\ldots\times\partial\Omega_n.
\]
If each $\partial\Omega_j$ is piecewise $\mathcal{C}^1$ and a function $f$ is continuous on $\partial_0\Omega$, we define the integral
\[
\int_{\partial_0\Omega}\frac{f(\zeta)}{\zeta-z}d\zeta:=\int_{\partial\Omega_1}\ldots\int_{\partial\Omega_n}\frac{f(\zeta_1,\ldots,\zeta_n)}{(\zeta_1-z_1)\cdot\ldots\cdot(\zeta_n-z_n)}d\zeta_1\ldots d\zeta_n, \quad z\in\Omega.
\]

Throughout this paper, we use coordinatewise multiplication, denoted by  
\[
z\cdot w = zw := (z_1w_1,\ldots,z_nw_n),
\]
where $z = (z_1,\ldots,z_n)$ and $w = (w_1,\ldots,w_n) \in \mathbb{C}^n$. The identity element in this multiplication, denoted $(1,\ldots,1)$ ($n$ times), is written as $\mathbf{1}_n$. For $z\in\mathbb{C}_*^n$, we define $z^{-1}$ as $(1/z_1,\ldots,1/z_n)$. Given two sets $A, B\subset\mathbb{C}^n$, their algebraic product is given by  
\[
AB = A\cdot B := \{ ab : a\in A, b\in B \}.
\]
Similarly, coordinatewise addition is defined as  
\[
z + w := (z_1+w_1,\ldots,z_n+w_n),
\]
where $z, w\in\mathbb{C}^n$.  

For any $z\in\mathbb{C}^n$ and a function $f$, we define the dilatation of $f$ by  
\[
f_z(w) := f(zw).
\]

Here, $\alpha\in\mathbb{N}^n$ represents a multi-index. The symbols $\alpha!$, $|\alpha|$, and $\zeta^\alpha$ denote, respectively, the factorial product $\alpha_1!\cdot\ldots\cdot\alpha_n!$, the sum $\alpha_1+\ldots+\alpha_n$, and the monomial $\zeta_1^{\alpha_1}\cdot\ldots\cdot\zeta_n^{\alpha_n}$. The differential operator $D^\alpha$ is defined as  
\[
D^\alpha := \bigg(\frac{\partial}{\partial z_1}\bigg)^{\alpha_1} \circ \ldots \circ \bigg(\frac{\partial}{\partial z_n}\bigg)^{\alpha_n}.
\]

For a compact set $K$, we define the supremum norm of a bounded function $f:A\to\mathbb{C}$ as  
\[
\lVert f\rVert_K := \sup_{z\in K} |f(z)|.
\]
The space of continuous functions on $K$, denoted by $\mathcal{C}(K)$, is equipped with the norm $\lVert \cdot \rVert_K$.  

The space $H(\Omega)$ consists of holomorphic functions on an open set $\Omega\subset\hat{\mathbb{C}}^n$, endowed with the topology of uniform convergence on compact sets. For an arbitrary set $S\subset\hat{\mathbb{C}}^n$, the space of germs of holomorphic functions on $S$ is defined as  
\[
H(S) := \bigcup_{\omega\supset S\ \textup{open}} H(\omega).
\]
The topology on $H(S)$ is the finest locally convex topology for which all restriction maps  
\[
H(\omega)\owns f\mapsto f|_{S}\in H(S)
\]
are continuous. The subspace $H_0(S)$ consists of all functions in $H(S)$ that vanish at any infinite point of $S$. We denote by $\mathcal{P}_n$ the set of all polynomials in $n$ variables.

Let $\Omega\subset\mathbb{C}^n$ be an open set. A continuous linear functional on $H(\Omega)$ is referred to as an analytic functional on $\Omega$. The space of all analytic functionals, denoted by $H(\Omega)'$, forms the dual space of $H(\Omega)$. A compact set $K$ is called a carrier for $T\in H(\Omega)'$ if for every neighborhood $\omega$ of $K$ with $\omega\subset\Omega$, there exists a constant $C_\omega$ such that  
\[
|Tf| \leq C_\omega \rVert f\rVert_\omega, \quad f\in H(\Omega).
\]
In particular, if $T\in H(\Omega)'$ is carried by a compact set $K$, then $T(f)$ can be uniquely defined for any $f$ that is a uniform limit of functions in $H(\Omega)$ in some neighborhood of $K$.  

If $X$ and $Y$ are Fréchet spaces, the space of all continuous linear operators from $X$ to $Y$ is denoted by $\mathcal{L}(X,Y)$. If $X=Y$, we simply write $\mathcal{L}(X)$. Throughout this paper, the class of all multipliers on an open set $\Omega$ is denoted by $\mathsf{M}(\Omega)$.  
\newline For additional concepts in Functional Analysis or Complex Analysis not explicitly covered here, we refer to the books \cite{MV} and \cite{Krantz}.

\subsection{Representation of $\mathsf{M}(\Omega)$ via Analytic Functionals}

For a Runge open set $\Omega\subset\mathbb{C}^n$ we define
$$\mathcal{F}(\Omega):=\big{\{}T\in H(\mathbb{C}^n)^\prime\,:\,T \textup{ has a carrier contained in }z^{-1}\Omega\textup{ for every }z\in\Omega\setminus\mathcal{N}\big{\}}.$$

\begin{thm}\label{main theorem}
Suppose $\Omega\subset\mathbb{C}^n$ is a Runge domain. The map 
$$\Phi:\mathcal{F}(\Omega)\rightarrow \mathsf{M}(\Omega)$$
defined by
$$\Phi (T)(f)(z):=T_z (f_z ),\ \ \ f\in H(\Omega),\ z\in\Omega\setminus\mathcal{N},$$
acts as an isomorphism between vector spaces. Here $T_z$ is the unique element of $H(z^{-1}\Omega)^\prime$ satisfying $T=T_z$ on $H(\mathbb{C}^n)$, and $f_z(\cdot)=f(z\cdot).$ The multiplier sequence of $\Phi(T)$ coincides with the sequence of moments of the analytic functional $T$. The inverse mapping is defined as follows: for any $M\in\mathsf{M}(\Omega)$, the analytic functional 
$$T=\delta_{\mathbf{1}_n}\circ M:H(\mathbb{C}^n)\longrightarrow\mathbb{C}$$
extends to $H(z^{-1}\Omega)$ for every $z\in\Omega\setminus\mathcal{N}$ and satisfies the equation $\Phi(T)=M,$ where $\delta_{\mathbf{1}_n}$ denotes the point evaluation at $\mathbf{1}_n$. 
\end{thm}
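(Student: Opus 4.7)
The plan is to carry the proof through four stages: first, constructing for each $z\in\Omega\setminus\mathcal{N}$ the continuous extension $T_z\in H(z^{-1}\Omega)^\prime$ of $T$; second, showing that $g(z):=T_z(f_z)$ is holomorphic on $\Omega\setminus\mathcal{N}$ with a local continuity estimate; third (the main obstacle), extending $g$ to an element of $H(\Omega)$ and proving $\Phi(T):H(\Omega)\to H(\Omega)$ is continuous; fourth, verifying the multiplier property and constructing the inverse. For the first stage, the coordinatewise scaling $w\mapsto z^{-1}w$ is a biholomorphism sending $\Omega$ onto $z^{-1}\Omega$ and carrying polynomials to polynomials, so $z^{-1}\Omega$ inherits the Runge property; hence $\mathcal{P}_n$ is dense in $H(z^{-1}\Omega)$, and the carrier estimate $|T(p)|\leq C_\omega\|p\|_\omega$ (valid on any Runge neighborhood $\omega$ of a carrier of $T$ contained in $z^{-1}\Omega$) uniquely promotes $T|_{\mathcal{P}_n}$ to $T_z\in H(z^{-1}\Omega)^\prime$.

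For the second stage, fix $z_0\in\Omega\setminus\mathcal{N}$, a carrier $K\subset z_0^{-1}\Omega$ of $T$, and an open $\omega$ with $K\subset\omega\Subset z_0^{-1}\Omega$; shrinking to a neighborhood $V$ of $z_0$ arranges $V\cdot\overline{\omega}\subset\Omega$. The joint holomorphy of $(z,w)\mapsto f(zw)$ on $V\times\omega$ makes $z\mapsto f_z|_\omega$ a holomorphic map from $V$ into $H(\omega)$, and composing with the carrier-bound extension of $T$ to $H(\omega)$ yields holomorphy of $g$ on $V$. The same data give the local estimate $|g(z)|\leq C_{z_0}\|f\|_{K'_{z_0}}$ with $K'_{z_0}:=\overline{V\cdot\omega}\subset\Omega$; covering any compact $L\Subset\Omega\setminus\mathcal{N}$ by finitely many such neighborhoods produces $\|g\|_L\leq C_L\|f\|_{K'_L}$ for some compact $K'_L\subset\Omega$.

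The third stage is the main obstacle. Approximate $f$ by polynomials $p_k=\sum_\alpha c_\alpha^{(k)}w^\alpha\to f$ in $H(\Omega)$ (possible by Runge-ness of $\Omega$). Direct expansion yields $T_z(p_{k,z})=\sum_\alpha c_\alpha^{(k)}m_\alpha z^\alpha=:q_k(z)$ with $m_\alpha:=T(w^\alpha)$, and each $q_k$ is a polynomial in $z$ defined on all of $\mathbb{C}^n$. By the estimate of the previous stage, $(q_k)$ is Cauchy uniformly on every compactum of $\Omega\setminus\mathcal{N}$. To upgrade this to Cauchy convergence on compacta of $\Omega$, for every $z_0\in\Omega$ select a closed polydisc $\overline{\mathbb{P}_n(z_0,r)}\subset\Omega$ with $r_j\neq|z_{0,j}|$ for every $j$; then $\partial_0\mathbb{P}_n(z_0,r)$ is a compactum contained in $\Omega\setminus\mathcal{N}$. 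Applying Cauchy's integral formula to the entire functions $q_k-q_\ell$ on this distinguished boundary controls their values on $\mathbb{P}_n(z_0,r/2)$ by the supremum on $\partial_0\mathbb{P}_n(z_0,r)$, yielding uniform Cauchy convergence on $\mathbb{P}_n(z_0,r/2)$. Since such smaller polydiscs cover $\Omega$, $(q_k)$ converges in $H(\Omega)$ to a limit $\Phi(T)(f)\in H(\Omega)$, independent of the approximating sequence and agreeing with $T_z(f_z)$ on $\Omega\setminus\mathcal{N}$; the construction makes $\Phi(T):H(\Omega)\to H(\Omega)$ linear and continuous.

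Finally, the multiplier property follows from $\Phi(T)(w^\beta)(z)=z^\beta T_z(w^\beta)=z^\beta T(w^\beta)=m_\beta z^\beta$, identifying the multiplier sequence with the moment sequence of $T$. For the inverse, given $M\in\mathsf{M}(\Omega)$ the composition $T:=\delta_{\mathbf{1}_n}\circ M$ defines an element of $H(\mathbb{C}^n)^\prime$, continuous because the exponential bounds on $(m_\alpha)$ forced by continuity of $M$ make the Taylor-series representation $T(f)=\sum_\alpha m_\alpha c_\alpha$ convergent for every entire $f=\sum c_\alpha z^\alpha$. For $z\in\Omega\setminus\mathcal{N}$ define $S_z:H(z^{-1}\Omega)\to\mathbb{C}$ by $S_z(g):=M(g_{z^{-1}})(z)$; the calculation $S_z(p)=M(p_{z^{-1}})(z)=\sum_\alpha c_\alpha m_\alpha=T(p)$ for polynomials $p$ shows $S_z$ is a continuous extension of $T|_{\mathcal{P}_n}$ to $H(z^{-1}\Omega)$, so $T$ has a carrier in $z^{-1}\Omega$ and hence $T\in\mathcal{F}(\Omega)$. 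The identity $(f_z)_{z^{-1}}=f$ gives $\Phi(T)(f)(z)=S_z(f_z)=M(f)(z)$, establishing surjectivity. Injectivity is immediate since $\Phi(T)=0$ forces every moment $m_\alpha$ to vanish, so $T$ vanishes on the dense subspace $\mathcal{P}_n\subset H(\mathbb{C}^n)$.
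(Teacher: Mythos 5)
Your proposal is correct and follows essentially the same route as the paper: polynomial density from the Runge property, the carrier estimate to control $T_z(f_z)$ locally off $\mathcal{N}$, Cauchy's integral formula over a distinguished polydisc boundary avoiding $\mathcal{N}$ to extend across $\Omega\cap\mathcal{N}$, and the same construction of the inverse via $T=\delta_{\mathbf{1}_n}\circ M$ with the estimate $|M(f_{z^{-1}})(z)|\le C\lVert f\rVert_{z^{-1}J_z}$. The only difference is bookkeeping: the paper glues local operators $M_z:H(\Omega)\to H(U_z)$, while you track the approximating polynomials $q_k$ globally and verify they are uniformly Cauchy on compacta.
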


\

\begin{proof}
We begin with two key observations. Fix $T\in\mathcal{F}(\Omega)$. 
First, we observe that
\begin{equation}\label{poczatek}
T(p_z) = \sum_{\alpha} p_\alpha T(\zeta^\alpha) z^\alpha,
\end{equation}
for every polynomial $p = \sum_{\alpha} p_\alpha \zeta^\alpha \in \mathcal{P}_n$ and $z\in\Omega\setminus\mathcal{N}$. Hence, the function
\[
g_{T}^{(p)}(w) := \sum_{\alpha} p_\alpha T(\zeta^\alpha) w^\alpha
\]
is well-defined and holomorphic on $\Omega$ for every $p\in\mathcal{P}_n$.

Moreover, note that
\begin{equation*}
T(f_{ab}) = T_z(f_{ab}) = T_z((f_a)_b),
\end{equation*}
for every $z\in\Omega\setminus\mathcal{N}, f\in H(\mathbb{C}^n),$ and $a, b\in\mathbb{C}^n$.
\newline We shall need

\begin{proposition}For every $T\in \mathcal{F}(\Omega)$ and $z\in\Omega$, there exists a neighborhood $U_z$  of $z$ such that  the operator
$$M_{z}:H(\Omega)\supset\mathcal{P}_n\owns p\mapsto g_{T}^{(p)}\in \mathcal{P}_n\subset H(U_z)$$
is continuous. In particular, $M_{z}$ extends to a continuous operator from $H(\Omega)$ to $H(U_z)$.
\end{proposition}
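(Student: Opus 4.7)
The plan rests on the identity
\[
g_T^{(p)}(w) = T(p_w), \qquad p\in\mathcal{P}_n,\ w\in\mathbb{C}^n,
\]
which follows by writing $p_w(\zeta)=p(w\zeta)=\sum_\alpha p_\alpha w^\alpha\zeta^\alpha$ and applying $T$ termwise, so that the identity (1) actually holds for every $w\in\mathbb{C}^n$, not merely on $\Omega\setminus\mathcal{N}$. The proposition therefore reduces to producing a compact $L\subset\Omega$ and a constant $C>0$ with $|T(p_w)|\leq C\|p\|_L$ uniformly in $w\in U_z$ for some neighborhood $U_z$ of $z$. Once such a bound is established on $\mathcal{P}_n$, the claimed continuous extension $H(\Omega)\to H(U_z)$ is automatic, since $\mathcal{P}_n$ is dense in $H(\Omega)$ (as $\Omega$ is Runge) and $H(U_z)$ is Fr\'echet.

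The next step is a localization lemma: for every compact $K\subset\Omega\setminus\mathcal{N}$ there exist a compact $L\subset\Omega$ and $C>0$ such that $|T(p_w)|\leq C\|p\|_L$ for all $p\in\mathcal{P}_n$ and $w\in K$. To prove it, fix $w\in K$; since $T\in\mathcal{F}(\Omega)$, $T$ admits a carrier $K_w\subset w^{-1}\Omega$. Pick a bounded open $V_w$ with $K_w\subset V_w\subset\overline{V_w}\subset w^{-1}\Omega$ and a constant $C_w$ satisfying $|T(f)|\leq C_w\|f\|_{V_w}$ for every $f\in H(\mathbb{C}^n)$. Because $w\overline{V_w}\subset\Omega$ is compact and $\Omega$ is open, there exists a neighborhood $N_w$ of $w$ with $\overline{N_w}\cdot\overline{V_w}\subset\Omega$. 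For $w'\in N_w$ one then has
\[
|T(p_{w'})|\leq C_w\|p_{w'}\|_{V_w}=C_w\|p\|_{w'V_w}\leq C_w\|p\|_{\overline{N_w}\cdot\overline{V_w}},
\]
and a finite subcover of $K$ by such $N_w$ finishes the lemma.

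Finally, apply the lemma along a distinguished boundary tailored to $z$, so as to cover the case $z\in\mathcal{N}$ as well. Choose a polyradius $r\in\mathbb{R}_{>0}^n$ small enough that $\overline{\mathbb{P}_n(z,r)}\subset\Omega$ and, in addition, $r_j<|z_j|$ whenever $z_j\neq 0$; then the distinguished boundary $\Gamma:=\partial_0\mathbb{P}_n(z,r)$ is a compact subset of $\Omega\setminus\mathcal{N}$. The lemma applied to $\Gamma$ produces $C>0$ and a compact $L\subset\Omega$ with $\max_\Gamma|g_T^{(p)}|\leq C\|p\|_L$, and since $g_T^{(p)}$ is holomorphic on $\Omega$, Cauchy's integral formula on $\mathbb{P}_n(z,r)$ delivers
\[
\|g_T^{(p)}\|_{\mathbb{P}_n(z,r/2)}\leq 2^n C\|p\|_L,
\]
so $U_z:=\mathbb{P}_n(z,r/2)$ is the required neighborhood. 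The main obstacle is precisely the coupling between $z\in\mathcal{N}\cap\Omega$ and the carrier hypothesis, which is formulated only for points of $\Omega\setminus\mathcal{N}$; the Cauchy-integral trick, exploiting that a small polydisc around $z$ has its distinguished boundary entirely in $\mathbb{C}_*^n$, is the device that bridges this gap.
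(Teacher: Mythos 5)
Your proof is correct and follows essentially the same route as the paper: carrier estimates yield uniform bounds for $w$ ranging over compact subsets of $\Omega\setminus\mathcal{N}$, and the Cauchy integral over a distinguished boundary avoiding $\mathcal{N}$ bridges to points with a vanishing coordinate. The only cosmetic difference is that you package the first step as a localization lemma and then apply the Cauchy argument uniformly to every $z\in\Omega$, whereas the paper treats $z\in\Omega\setminus\mathcal{N}$ (via multiplicative polydiscs $z\,\mathbb{P}_n(\mathbf{1}_n,\delta_z)$) and $z\in\Omega\cap\mathcal{N}$ as two separate cases.
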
 

\begin{proof} {\bf Case :} $\mathbf{z\in\Omega\setminus\mathcal{N}}$

We choose a carrier $K_z$ for $T_z$. Since $K_z$ is a compact subset of $z^{-1}\Omega$, there is $\epsilon_z\in\mathbb{R}_{>0}^n$ such that $\mathbb{P}_n({\mathbf{1}_n},3\epsilon_z)K_z\subset z^{-1}\Omega.$ $K_z$ is a carrier and therefore
$$|T_z(f_z)|\leq C_z\lVert f_z\rVert_{\mathbb{P}_n(\mathbf{1}_n,\epsilon_z)K_z}=C_z\rVert f\lVert_{z\mathbb{P}_n(\mathbf{1}_n,\epsilon_z)K_z}$$
for some $C_z>0$ and all $f\in H(\mathbb{C}^n).$ Consequently
\begin{equation}\label{5}
|T_z\big{(}(f_w)_z\big{)}|\leq C_z\lVert (f_w)_z\rVert_{\mathbb{P}_n({\mathbf{1}_n},\epsilon_z)K_z}=C_z\rVert f\lVert_{zw\mathbb{P}_n({\mathbf{1}_n},\epsilon_z)K_z}\leq C_z\rVert f\lVert_{z\mathbb{P}_n({\mathbf{1}_n},2\epsilon_z)K_z}
\end{equation}
for $f\in H(\mathbb{C}^n),\,w\in\mathbb{P}_n({\mathbf{1}_n},\delta_z),$ where $\delta_z\in\mathbb{R}_{>0}^n$ is so that 
$$\mathbb{P}_n({\mathbf{1}_n},\delta_z)\mathbb{P}_n({\mathbf{1}_n},\epsilon_z)\subset\mathbb{P}_n({\mathbf{1}_n},2\epsilon_z).$$
We set
\begin{equation*}
U_z:=z\mathbb{P}_n({\mathbf{1}_n},\delta_z)\ \ \textup{ for }\  z\in\Omega\setminus\mathcal{N}.
\end{equation*}
Clearly, 
\begin{equation}\label{inkluzja}
\Omega\setminus\mathcal{N}\subset\bigcup_{z\in\Omega\setminus\mathcal{N}}U_z.
\end{equation}
Assume $(p_k)\subset\mathcal{P}_n$ and $p_k\rightrightarrows 0$ locally uniformly on $\Omega$. 
 (\ref{5}) applied to $g_{T}^{(p_k)}$ gives
$$\lVert g_{T}^{(p_k)}\rVert_{U_z}\leq C_z\rVert p_k\lVert_{z\mathbb{P}_n({\mathbf{1}_n},2\epsilon_z)K_z}.$$
By the choice of $\epsilon_z$, we know that $z\mathbb{P}_n({\mathbf{1}_n},2\epsilon_z)K_z\Subset\Omega$. Hence, $g_{T}^{(p_k)}\rightrightarrows 0$ uniformly on $U_z$.

{\bf Case :} $\mathbf{z\in\Omega\cap\mathcal{N}}$

Take $r_z\in\mathbb{R}_{>0}^n$ such that $\mathbb{P}_n(z,2r_z)\Subset\Omega$ and $\partial_0 \mathbb{P}_n(z,2r_z)\cap\mathcal{N}=\varnothing$. 
We set
\begin{equation*}
U_z:=\mathbb{P}_n(z,r_z)\ \ \textup{ for }\  z\in\Omega\cap\mathcal{N}.
\end{equation*}

Since (\ref{inkluzja}), we might choose $N\in\mathbb{N},\,z_1,\ldots,z_N \in\partial_0 \mathbb{P}_n(z,2r_z)$ such that 
$\partial_0 \mathbb{P}_n(z,2r_z)\subset\bigcup_{j=1}^N U_{z_j}.$ 

Fix $(p_k)\subset\mathcal{P}_n$ and $p_k\rightrightarrows 0$ locally uniformly on $\Omega$.  Cauchy's integral formula applied to $g_{T}^{(p_k)}$ on  $\mathbb{P}_n(z,2r)$ gives 
$$g_{T}^{(p_k)}(w)=\frac{1}{(2\pi i)^n}\int_{\partial_0 \mathbb{P}_n(z,2r_z)} \frac{g_{T}^{(p_k)}(\zeta)}{\zeta-w}d\zeta,\ \ w\in\mathbb{P}_n(z,2r).$$
Consequently, applying once more (\ref{5}), we have 
\begin{multline}\label{klucz}
\sup_{w\in\mathbb{P}_n(z,r_z)}|\,g_{T}^{(p_k)}(w)|\ =\,\sup_{w\in\mathbb{P}_n(z,r_z)}\big{|}\frac{1}{(2\pi i)^n}\int_{\partial_0 \mathbb{P}_n(z,2r_z)} \frac{g_{T}^{(p_k)}(\zeta)}{\zeta-w} d\zeta\big{|}\ \leq \\ \big{(}\frac{2}{r_z}\big{)}^n\,\Vert g_{T}^{(p_k)}\rVert_{\partial_0 \mathbb{P}_n(z,2r_z)}\ \leq\ \big{(}\frac{2}{r_z}\big{)}^n \max_{j=1,\ldots, N}\,\lVert g_{T}^{(p_k)}\rVert_{U_{z_j}}\ \leq \\ \big{(}\frac{2}{r_z}\big{)}^n \max_{j=1,\ldots, N}\,\lVert p_k\rVert_{z_j\mathbb{P}_n({\mathbf{1}_n},2\epsilon_{z_j})K_{z_j}}.
\end{multline}
Clearly, $\bigcup_{j=1}^{N}z_j\mathbb{P}_n({\mathbf{1}_n},2\epsilon_{z_j})K_{z_j}\Subset\Omega$. Consequently, (\ref{klucz}) shows that $g_{T}^{(p_k)}\rightrightarrows 0$ uniformly on $\mathbb{P}_n(z,r_z)$. 
\end{proof}

For simplicity, we denote the extension in Proposition 3.2 using the same symbol:
$$M_z: H(\Omega)\rightarrow H(U_z),$$
$$M_z(f):=\lim_k M_z(p_k),\ \ f\in H(\Omega),$$
where $(p_k)\subset\mathcal{P}_n,\,p_k\rightrightarrows f$ locally uniformly on $\Omega$. Let us note that $M_z(f)$ is indeed holomorphic on $U_z$ due to the Weierstrass theorem. 
Furthermore, since for every $z,\,w\in\Omega$ and $p\in\mathcal{P}_n$ we have
$$M_{z}(p)=M_{w}(p)\textup{ on }U_z\cap U_w,$$
by continuity, we also have that for $f\in H(\Omega)$,
$$M_{z}(f)=M_{w}(f)\textup{ on }U_z\cap U_w.$$
This, coupled with the identity principle for holomorphic functions, proves that for every $T\in \mathcal{F}(\Omega),\,f\in H(\Omega)$ the expression:
\begin{equation*}
g^{(f)}:\Omega\owns z\longrightarrow M_{z}{(f)}(z)\in\mathbb{C}
\end{equation*}
defines a holomorphic function. Specifically, the mapping:
$$\Phi(T):H(\Omega)\owns f\mapsto g^{(f)}\in H(\Omega)$$
is well-defined.

Let us observe that every monomial is an eigenvector for $\Phi(T)$ immediately follows from its definition (see (\ref{poczatek})). Moreover, observe if we establish that $\Phi(T)$ is continuous, then we can conclude that $\Phi$ is injective. This is because $\Omega$ is a Runge domain. Therefore, $T\not=0$ if and only if there is $\alpha\in\mathbb{N}^n$ for which $T(\zeta^\alpha)\not=0$. To complete the argument, we need to demonstrate the continuity of $\Phi(T)$. This follows directly from the fact that every $M_z\in\mathcal{L}(H(\Omega),H(U_z))$, combined with the fact that $\bigcup_{z\in\Omega} U_z=\Omega$, where $U_z$ are as defined in the proof of Proposition 3.2 (compare with \cite[Step IV]{T1}).

So far, we have proved that $\Phi$ is a monomorphism. Therefore, we are only left with demonstrating its surjectivity before concluding the proof. This can be indicated as follows:
Fix $M\in\mathsf{M}(\Omega).$ Define 
$$Tf:=M(f)(\mathbf{1}_n),\ \ f\in H(\mathbb{C}^n).$$
Clearly, $T$ is a composition of 
$M$ and the evaluation at $\mathbf{1}_n$, both of which are continuous functions, implying that $T$ is continuous. For every $z\in\Omega\setminus\mathcal{N}$ there are a compact set $J_z\subset\Omega$ and a constant $C>0$ such that for $f\in H(\mathbb{C}^n)$ we have
\begin{equation}\label{3.9}
|T(f)|=|M(f)(\mathbf{1}_n)|=|M(f_{z^{-1}})(z)|\leq C\lVert f_{z^{-1}}\rVert_{J_z}=C\lVert f\rVert_{z^{-1} J_z}.
\end{equation} 
Clearly, (\ref{3.9}) remains true for every neighborhood of $z^{-1}J_z.$ Hence, $T$ has a carrier contained in $z^{-1}\Omega$, which implies that $T$ belongs to $\mathcal{F}(\Omega)$. Therefore, the expression $\widetilde{M}:=\Phi(T)$ makes sense. We claim that $M=\widetilde{M}$ on $\mathcal{P}_n$, and so everywhere. Indeed, for $\alpha\in\mathbb{N}^n,\,w\in \mathbb{C}^n\setminus\mathcal{N}$ we have
$$\widetilde{M}(\zeta^\alpha)(w)=T((w\zeta)^\alpha)=T(\zeta^\alpha)w^\alpha=M(\zeta^\alpha)(\mathbf{1}_n)w^\alpha=m_\alpha w^\alpha=M(\zeta^\alpha)(w).$$
This completes the proof.
\end{proof}

\begin{remark} \textup{In the one-dimensional case, we have a unique convex compact minimal carrier, but this does not hold for $n>1$ \textup{(}see \cite[Example pg. 314]{H}\textup{)}. For this reason, we have $\mathcal{F}(\Omega)=H(V(\Omega))^\prime$ presuming that $\Omega\subset\mathbb{C}$ is a convex domain, where $V(\Omega)$ is the so-called dilatation set of $\Omega$, that is, 
$V(\Omega)=\{z\in\mathbb{C}\,:\,z\Omega\subset\Omega\}$ (see \cite{T1}). Nonetheless, it is always true that $H(V(\Omega))^\prime\subset\mathcal{F}(\Omega)$.}
\end{remark}

\subsection{Special Case}

Throughout this section, we assume that $\Omega=\Omega_1\times\ldots\times\Omega_n$ with $\Omega_j\subset\mathbb{C}$ open for $j=1,\ldots,n$. Our first aim is to prove Theorems 4.1 and 4.2, which provide an alternative representation of multipliers on $H(\Omega)$ via suitable germs of holomorphic functions whose Laurent or Taylor coefficients coincide with the eigenvalues of the operator.

Every analytic functional $T\in H(\Omega)^\prime$ corresponds to a holomorphic function $f_T\in H_0((\hat{\mathbb{C}}\setminus \Omega_1)\times\ldots\times(\hat{\mathbb{C}}\setminus \Omega_n))$ via the so-called  K\"{o}the Grothendieck Tillmann duality (see \cite[Satz 1]{Tillmann}). Specifically, the function \(f_T\) associated with \(T\) is given by
\begin{equation}\label{2.2}
f_T(\zeta)=T\Big{(}\frac{1}{\zeta_1-\cdot}\cdot\ldots\cdot\frac{1}{\zeta_n-\cdot}\Big{)}, \ \ \zeta=(\zeta_1,\ldots,\zeta_n)\in(\hat{\mathbb{C}}\setminus \Omega_1)\times\ldots\times(\hat{\mathbb{C}}\setminus \Omega_n).
\end{equation}
Conversely, for every $f\in H_0((\hat{\mathbb{C}}\setminus \Omega_1)\times\ldots\times(\hat{\mathbb{C}}\setminus \Omega_n))$ there exists an analytic functional $T\in H(\mathbb{C}^n)^\prime$ such that 
$f_T=f.$
$T$ can be easily obtained from $f$: if $f\in H_0(U_1\times\ldots\times U_n)$ for some neighborhood $U_j$ of $\hat{\mathbb{C}}\setminus \Omega_j,\,j=1,\ldots,n$, then
\begin{equation}\label{2.3}
T_f(h)=\frac{1}{(2\pi i)^n}\int_{\gamma_1\times\ldots\times\gamma_n} h(\zeta)f(\zeta)d\zeta,\ \ h\in H(\Omega),
\end{equation}
where $\gamma_j\subset\mathbb{C}\setminus U_j$ is a finite union of closed curves  such that the index $n(\gamma_j,z)=1$ for any $z\in \Omega_j,$ and $n(\gamma_j,z)=0$ if $z\notin U_j$. The integral in (\ref{2.3}) does not depend on the choice of $\gamma_j$'s.

For $z\in\hat{\mathbb{C}}^n$ and $\mathcal{S}\subset 2^{\hat{\mathbb{C}}^n}$ we define:
$$H(z,\mathcal{S}):=\left\{f\in H(\{z\})|\ \textup{for every }S\in\mathcal{S}\textup{ there is }f_S\in H(A)\textup{ so that }f_S\equiv f\textup{ around }z\right\},$$

$$H_0(z,\mathcal{S}):=\left\{f\in H(z,\mathcal{S})|\ f(z)=0\right\}.$$

\begin{thm}\label{Theorem 4.1}
The mapping
$$\Psi:\mathsf{M}(\Omega)\owns M\mapsto\psi_{M}\in H_0((\infty,\ldots,\infty),\mathcal{O}_\Omega),$$
defined by
$$\psi_M(w)=\sum_{\alpha\in\mathbb{N}^n}\frac{m_\alpha}{w^{\alpha+\mathbf{1}_n}},$$
with $M(\zeta^\alpha)=m_\alpha\zeta^\alpha,\ \alpha\in\mathbb{N}^n$, acts as an isomorphism between the algebra of multipliers on $H(\Omega)$ with composition of operators and the space of germs of holomorphic functions at $(\infty,\ldots,\infty)$ that extend holomorphically on every set of the form $(\hat{\mathbb{C}}\setminus z_1^{-1}\Omega_1)\times\ldots\times (\hat{\mathbb{C}}\setminus z_n^{-1}\Omega_n),\,z\in\Omega\setminus\mathcal{N}$ with multiplication of Laurent series at $(\infty,\ldots,\infty)$ defined by 
$$\sum_{\alpha\in\mathbb{N}^n}\frac{a_\alpha}{w^{\alpha+\mathbf{1}_n}}\ \hat{\ast}\,\sum_{\alpha\in\mathbb{N}^n}\frac{b_\alpha}{w^{\alpha+\mathbf{1}_n}}:=\sum_{\alpha\in\mathbb{N}^n}\frac{a_\alpha b_\alpha}{w^{\alpha+\mathbf{1}_n}}.$$
Here
$$\mathcal{O}_{\Omega}=\Big{\{}(\hat{\mathbb{C}}\setminus z_1^{-1}\Omega_1)\times\ldots\times (\hat{\mathbb{C}}\setminus z_n^{-1}\Omega_n)\Big{\}}_{z\in\Omega\setminus\mathcal{N}}.$$
The multiplier sequence of the given multiplier is equal the Laurent coefficients at $(\infty,\ldots,\infty)$ of the corresponding germ.

Moreover, the action of the multiplier $M_\psi\in\mathsf{M}(\Omega)$ associated with $\psi\in H_0(\infty,\mathcal{O}_\Omega)$ can be calculated explicitly as follows
 \begin{equation}\label{4.3}
 M_\psi(f)(z)=\frac{1}{(2\pi i)^n}\int_{\gamma_1\times\ldots\times\gamma_n} f_z(\zeta)\psi_{(z)}(\zeta)d\zeta,\ \ f\in H(\Omega),
 \end{equation}
 where every $\gamma_j$ is a finite union of curves chosen for $\psi_{(z)},$ an extension of $\psi$ on a neighbourhood of $(\hat{\mathbb{C}}\setminus z_1^{-1}\Omega_1)\times\ldots\times (\hat{\mathbb{C}}\setminus z_n^{-1}\Omega_n)$, and $f_z.$ 
\end{thm}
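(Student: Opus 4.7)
My plan is to factor $\Psi$ through Theorem 3.1 and the K\"{o}the--Grothendieck--Tillmann duality recalled in (\ref{2.2})--(\ref{2.3}). Given $M \in \mathsf{M}(\Omega)$, set $T := \Phi^{-1}(M) \in \mathcal{F}(\Omega)$. For every $z \in \Omega \setminus \mathcal{N}$ the functional $T$ extends uniquely to $T_z \in H(z^{-1}\Omega)^\prime$, and KGT associates with $T_z$ a function $f_{T_z} \in H_0((\hat{\mathbb{C}} \setminus z_1^{-1}\Omega_1)\times\cdots\times(\hat{\mathbb{C}} \setminus z_n^{-1}\Omega_n))$. Uniqueness in KGT guarantees that all these functions share a common germ at $(\infty,\ldots,\infty)$, producing a single $\psi_M \in H_0((\infty,\ldots,\infty), \mathcal{O}_\Omega)$. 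Conversely, given $\psi$ in this germ space, I would apply (\ref{2.3}) with any neighborhood extension $\psi_{(z)}$ to produce a family of $T_z \in H(z^{-1}\Omega)^\prime$; contour-independence of (\ref{2.3}) and Cauchy's theorem ensure they agree on $H(\mathbb{C}^n)$, so they assemble into a single $T \in \mathcal{F}(\Omega)$ and $M_\psi := \Phi(T)$ satisfies $\Psi(M_\psi) = \psi$.

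\textbf{The Laurent expansion.} To identify $\psi_M$ with $\sum_\alpha m_\alpha/w^{\alpha+\mathbf{1}_n}$, I would substitute the geometric expansion
\[
\frac{1}{\zeta_1-w_1}\cdots\frac{1}{\zeta_n-w_n} \;=\; \sum_{\alpha\in\mathbb{N}^n}\frac{w^\alpha}{\zeta^{\alpha+\mathbf{1}_n}}
\]
into (\ref{2.2}). Fixing $\zeta$ with $|\zeta_j|$ large enough relative to a carrier of $T_z$, the partial sums (viewed as functions of $w$) converge to the Cauchy kernel uniformly on a neighborhood of that carrier, so continuity of $T_z$ permits termwise evaluation and gives $\psi_M(\zeta) = \sum_\alpha T_z(\zeta^\alpha)/\zeta^{\alpha+\mathbf{1}_n} = \sum_\alpha m_\alpha/\zeta^{\alpha+\mathbf{1}_n}$, using $T_z(\zeta^\alpha) = T(\zeta^\alpha) = m_\alpha$ (moments coincide with the multiplier sequence by Theorem 3.1).

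\textbf{Algebra structure and integral formula.} Bijectivity of $\Psi$ is inherited from the bijection in Theorem 3.1 and from KGT. That $\Psi$ is an algebra homomorphism reduces to noting that if $M^{(1)}$ and $M^{(2)}$ have multiplier sequences $(m^{(1)}_\alpha)$ and $(m^{(2)}_\alpha)$, their composition acts diagonally on monomials with sequence $(m^{(1)}_\alpha m^{(2)}_\alpha)$, which is precisely the $\hat{\ast}$-product on Laurent coefficients. For the integral formula (\ref{4.3}), Theorem 3.1 gives $M_\psi(f)(z) = T_z(f_z)$ with $f_z \in H(z^{-1}\Omega)$; substituting (\ref{2.3}) with $h = f_z$ and the germ representative $\psi_{(z)}$ yields the displayed formula, at least for $z \in \Omega \setminus \mathcal{N}$, and extends to $z \in \Omega \cap \mathcal{N}$ by the holomorphy argument already used in the proof of Theorem 3.1. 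The main technical obstacle I anticipate is a careful simultaneous choice, for each $z$, of contours $\gamma_j$ lying both in $\mathbb{C}\setminus z_j^{-1}\Omega_j$ and in the holomorphy domain of the chosen extension $\psi_{(z)}$ while enclosing $z_j^{-1}\Omega_j$ with index one, together with a verification that the resulting integral is independent of the chosen representative $\psi_{(z)}$ and of the $\gamma_j$ (which reduces to Cauchy's theorem and an identity-principle argument via the Laurent coefficient calculation of the previous paragraph).
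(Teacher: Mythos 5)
Your proposal is correct and follows essentially the same route as the paper: both factor $\Psi$ through Theorem 3.1 combined with the K\"othe--Grothendieck--Tillmann duality applied to each extension $T_z$, identify the Laurent coefficients at $(\infty,\ldots,\infty)$ with the moments/multiplier sequence, recover the inverse by feeding a germ representative into the contour-integral formula to produce an element of $\mathcal{F}(\Omega)$, and read off the algebra isomorphism from the fact that composition of multipliers multiplies eigenvalue sequences coordinatewise. The only cosmetic difference is that the paper assembles the inverse via a single integral over $\partial_0\mathbb{P}_n(r)$ for large $r$ rather than gluing the family $\{T_z\}$, but these are interchangeable.
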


\begin{thm}\label{Theorem 4.2}
The mapping 
$$\widehat{\Psi}:\mathsf{M}(\Omega)\owns M\mapsto\psi_{M}\in  H(0,\hat{\mathcal{O}}_\Omega),$$
where
$$\psi_M(w)=\sum_{\alpha\in\mathbb{N}^n}m_\alpha w^\alpha,$$
with $M(\zeta^\alpha)=m_\alpha\zeta^\alpha,\ \alpha\in\mathbb{N}^n$, acts as an isomorphism between the algebra of multipliers on $H(\Omega)$ with composition of operators
and the space of germs of holomorphic functions at $0$ that extend holomorphically on every set of the form $(\hat{\mathbb{C}}\setminus z_1\Omega_1^{-1})\times\ldots\times (\hat{\mathbb{C}}\setminus z_n\Omega_n^{-1}),\,z\in\Omega\setminus\mathcal{N}$ with multiplication of Taylor series at $0$, defined as
$$\sum_{\alpha\in\mathbb{N}^n}a_\alpha w^{\alpha}\,\ast\,\sum_{\alpha\in\mathbb{N}^n}b_\alpha w^{\alpha}:=\sum_{\alpha\in\mathbb{N}^n}a_\alpha b_\alpha w^{\alpha}.$$
Here 
$$\hat{\mathcal{O}}_{\Omega}:=\Big{\{}(\hat{\mathbb{C}}\setminus z_1\Omega_1^{-1})\times\ldots\times (\hat{\mathbb{C}}\setminus z_n\Omega_n^{-1}) \Big{\}}_{z\in\Omega\setminus\mathcal{N}}.$$
The multiplier sequence of the given multiplier equals the Taylor coefficients at $0$ of the corresponding germ.

Furthermore, the action of the multiplier $M_\psi\in\mathsf{M}(\Omega)$ associated with $\psi\in H(0,\hat{\mathcal{O}}_\Omega)$ can be calculated explicitly as follows
 \begin{equation}\label{4.3'}
 M_\psi(f)(z)=\big{(}\frac{-1}{2\pi i}\big{)}^n\int_{1/\gamma_1\times\ldots \times 1/\gamma_n} f\big{(}z\zeta^{-1}\big{)}\frac{\psi_{(z)}(\zeta)}{\zeta}d\zeta,\ \ f\in H(\Omega),
 \end{equation}
 where $\gamma_j$ is a finite union of curves chosen for $\big{(}w\rightarrow\frac{\psi_{(z)}(w^{-1})}{w}\big{)},\,\psi_{(z)}$ is an extension of $\psi$ on a neighbourhood of $(\hat{\mathbb{C}}\setminus z_1\Omega_1^{-1})\times\ldots\times (\hat{\mathbb{C}}\setminus z_n\Omega_n^{-1})$. 
\end{thm}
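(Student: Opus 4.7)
The plan is to deduce the theorem from Theorem \ref{Theorem 4.1} by means of the involutive biholomorphism $\iota:\hat{\mathbb{C}}^n\to\hat{\mathbb{C}}^n$, $\iota(w)=w^{-1}=(1/w_1,\ldots,1/w_n)$, which exchanges $0$ with $(\infty,\ldots,\infty)$ and carries $\hat{\mathbb{C}}\setminus z_j^{-1}\Omega_j$ onto $\hat{\mathbb{C}}\setminus z_j\Omega_j^{-1}$ coordinatewise. Concretely, I would introduce
$$\Lambda:H_0((\infty,\ldots,\infty),\mathcal{O}_\Omega)\longrightarrow H(0,\hat{\mathcal{O}}_\Omega),\qquad (\Lambda\phi)(w):=\frac{\phi(w^{-1})}{w_1\cdots w_n},$$
and set $\widehat{\Psi}:=\Lambda\circ\Psi$, where $\Psi$ is the isomorphism supplied by Theorem \ref{Theorem 4.1}. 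Since $\iota\circ\iota=\mathrm{id}$, the same formula delivers the inverse of $\Lambda$ with the roles of the two germ spaces swapped, so once well-definedness is checked, bijectivity is automatic.

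The delicate point in showing that $\Lambda$ takes values in $H(0,\hat{\mathcal{O}}_\Omega)$ is divisibility by $w_1\cdots w_n$. The target sets $(\hat{\mathbb{C}}\setminus z_1\Omega_1^{-1})\times\ldots\times(\hat{\mathbb{C}}\setminus z_n\Omega_n^{-1})$ contain the origin and all coordinate hyperplanes (since $\Omega_j\subset\mathbb{C}$ forces $\infty\notin\Omega_j$, hence $0\in\hat{\mathbb{C}}\setminus z_j\Omega_j^{-1}$), and the factor $1/(w_1\cdots w_n)$ has simple poles on each of them. However, $\phi\in H_0$ vanishes at every infinite point of its domain, so $\phi(w^{-1})$ vanishes on each hyperplane $\{w_j=0\}$ separately; the UFD property of the analytic local rings $\mathcal{O}_{\mathbb{C}^n,a}$ upgrades these individual vanishings to joint divisibility by $w_1\cdots w_n$, making $\Lambda\phi$ holomorphic across the hyperplanes. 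The coefficient identification is then immediate: $\phi(w)=\sum_\alpha c_\alpha/w^{\alpha+\mathbf{1}_n}$ substitutes to $(\Lambda\phi)(w)=\sum_\alpha c_\alpha w^\alpha$, so Laurent coefficients of $\phi$ at infinity correspond exactly to Taylor coefficients of $\Lambda\phi$ at $0$. Composed with Theorem \ref{Theorem 4.1}, this shows that $\widehat{\Psi}$ sends $M$ to a germ whose Taylor coefficients at $0$ are precisely the multiplier sequence. Because both products $\hat{\ast}$ and $\ast$ are coefficient-wise multiplication, $\Lambda$ trivially intertwines them, so $\widehat{\Psi}$ is an algebra isomorphism.

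The integral formula (\ref{4.3'}) I would derive from (\ref{4.3}) by the change of variable $\zeta=\eta^{-1}$ in each coordinate. Writing $\phi=\Lambda^{-1}\psi$, the defining identity of $\Lambda$ gives $\phi_{(z)}(\zeta)=\psi_{(z)}(\eta)\cdot\eta_1\cdots\eta_n$, while $d\zeta_1\cdots d\zeta_n=(-1)^n d\eta_1\cdots d\eta_n/(\eta_1^2\cdots\eta_n^2)$ and $f_z(\zeta)=f(z\eta^{-1})$. Multiplying these turns the integrand of (\ref{4.3}) into $(-1)^n f(z\eta^{-1})\psi_{(z)}(\eta)/(\eta_1\cdots\eta_n)$; interpreting $1/\gamma_j$ in (\ref{4.3'}) as the image of $\gamma_j$ under $\iota$ with the orientation inherited from the substitution, and renaming $\eta$ back to $\zeta$, produces exactly the prefactor $(-1/(2\pi i))^n$ and the stated integrand. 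The main obstacle I anticipate is the careful sign and orientation bookkeeping in this step — confirming that the $(-1)^n$ from the Jacobian matches the convention on $1/\gamma_j$, and that $1/\gamma_j$ genuinely separates the singularity set $z_j\Omega_j^{-1}$ of $\psi_{(z)}$ from its complement in the sense required by the K\"othe--Grothendieck--Tillmann integral (\ref{2.3}). Both facts follow from $\iota$ being a biholomorphism exchanging $z_j^{-1}\Omega_j$ with $z_j\Omega_j^{-1}$, but merit explicit verification.
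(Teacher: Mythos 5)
Your proposal is correct in substance, but note that the paper itself supplies no proof of Theorem 4.2 at all: only Theorem 4.1 is proved, and Theorem 4.2 is left as the evident analogue. Your reduction via the inversion $\iota(w)=w^{-1}$ and the transfer map $(\Lambda\phi)(w)=\phi(w^{-1})/(w_1\cdots w_n)$ is the natural way to make that analogy precise, and all the main steps check out: $\iota$ does exchange $\hat{\mathbb{C}}\setminus z_j^{-1}\Omega_j$ with $\hat{\mathbb{C}}\setminus z_j\Omega_j^{-1}$, the coefficient bookkeeping $c_\alpha/w^{\alpha+\mathbf{1}_n}\mapsto c_\alpha w^\alpha$ is right, both products are coefficientwise so $\Lambda$ intertwines them, and the change of variables $\zeta=\eta^{-1}$ with Jacobian $(-1)^n\prod_j\eta_j^{-2}$ does turn \eqref{4.3} into \eqref{4.3'}. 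The alternative route would have been to rerun the proof of Theorem 4.1 directly against the functional $T_z$, reading off the Taylor expansion of $f_{T}$ at $0$ instead of its Laurent expansion at $(\infty,\ldots,\infty)$; your transfer argument buys the result with less repetition, at the cost of the divisibility and orientation checks you identify.

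One point deserves to be made explicit rather than assumed. Your divisibility argument invokes ``$\phi\in H_0$ vanishes at every infinite point of its domain,'' but the paper's definition of $H_0((\infty,\ldots,\infty),\mathcal{O}_\Omega)$ via $H_0(z,\mathcal{S})$ literally requires only $f((\infty,\ldots,\infty))=0$, which does not by itself force vanishing on the hyperplanes $\{w_j=\infty\}$ (consider $1/w_1$ in two variables). What saves the argument is that the germs you actually need to push through $\Lambda$ are those in the image of $\Psi$, i.e.\ of the form $\sum_\alpha m_\alpha/w^{\alpha+\mathbf{1}_n}$ with extensions produced by the K\"othe--Grothendieck--Tillmann duality, and those extensions do lie in $H_0(S)$ in the strong sense of vanishing at all infinite points; alternatively one reads $H_0((\infty,\ldots,\infty),\mathcal{O}_\Omega)$ with that stronger convention, which is clearly the paper's intent. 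With that clarification your argument is complete; the remaining orientation bookkeeping for $1/\gamma_j$ is exactly as you describe and causes no difficulty since $\iota$ is an involutive biholomorphism of $\hat{\mathbb{C}}$ preserving the separation property required in \eqref{2.3}.
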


\begin{proof}[Proof of Theorem \ref{Theorem 4.1}]
The identification of $H_0((\infty,\ldots,\infty),\mathcal{O}_\Omega)$ with $\mathsf{M}(\Omega)$ as linear topological spaces follows from Theorem \ref{main theorem} and the K\"{o}the Grothendieck Tillmann duality.  
Indeed, for any $M\in\mathsf{M}(\Omega)$ and $z\in\Omega\setminus\mathcal{N}$, there are a function $\psi_{(z)}\in H_0((\hat{\mathbb{C}}\setminus z_1\Omega_1^{-1})\times\ldots\times (\hat{\mathbb{C}}\setminus z_n\Omega_n^{-1}))$ and a finite union of curves $\gamma_j\subset z_j^{-1}\Omega_j$ for $j=1,\ldots,n$ such that
\begin{equation}\label{4.4}
T_zf=\frac{1}{(2\pi i)^n}\int_{\gamma_1\times\ldots\times\gamma_n} f(\zeta)\psi_{(z)}(\zeta)d\zeta,\ \ f\in H(z^{-1}\Omega),
\end{equation}
where \(T=T_M\) (as in Theorem~\ref{main theorem}). From (\ref{4.4}) it follows that if $(m_\alpha)_{\alpha\in\mathbb{N}^n}$ is the multiplier sequence of $M$, then
$$\psi_{(z)}(w)=\sum_{\alpha\in\mathbb{N}^n}\frac{m_\alpha}{w^{\alpha+\mathbf{1}_n}}$$
in a neighborhood of $(\infty,\ldots,\infty)$. Hence, the function 
$$\psi(w):=\sum_{\alpha\in\mathbb{N}^n}\frac{m_\alpha}{w^{\alpha+\mathbf{1}_n}}$$
belongs to $H_0((\infty,\ldots,\infty),\mathcal{O}_\Omega).$

To determine the action of the multiplier \(M_\psi\) associated with \(\psi\in H_0((\infty,\ldots,\infty),\mathcal{O}_\Omega)\), assume that \(\psi\) extends to \(\psi_{(z)}\in H(U)\) for some open set 
\[
U=U_1\times\cdots\times U_n\supset (\hat{\mathbb{C}}\setminus z_1^{-1}\Omega_1)\times\cdots\times (\hat{\mathbb{C}}\setminus z_n^{-1}\Omega_n)
\]
and fix \(z\in\Omega\setminus\mathcal{N}\). Let \(\gamma_j\) be a curve that separates \(\hat{\mathbb{C}}\setminus U_j\) from \(z^{-1}\Omega_j\) for every \(j=1,\ldots,n\). Then, by applying the functional in \eqref{4.4}, we deduce that
\[
\Biggl|\frac{1}{(2\pi i)^n}\int_{\gamma_1\times\cdots\times\gamma_n} f(\zeta) \psi_{(z)}(\zeta)d\zeta\Biggr|\le \frac{1}{(2\pi)^n}\lVert\psi_{(z)}\rVert_{\gamma_1\times\cdots\times\gamma_n}\lVert f\rVert_{\gamma_1\times\cdots\times\gamma_n}\prod_{j=1}^{n}l(\gamma_j),
\]
where \(l(\gamma_j)\) denotes the length of \(\gamma_j\), and
\[
\frac{1}{(2\pi i)^n}\int_{\partial_0\mathbb{P}_n(r)}\zeta^\alpha\psi_{(z)}(\zeta)d\zeta=m_\alpha,\quad \alpha\in\mathbb{N}^n,
\]
for sufficiently large \(r=(r_1,\ldots,r_n)\). Hence, the operator
\[
T_{\psi}:H(\mathbb{C}^n)\ni f\mapsto \frac{1}{(2\pi i)^n}\int_{\partial_0\mathbb{P}_n(r)}f(\zeta)\psi(\zeta)d\zeta
\]
defines an element of \(\mathcal{F}(\Omega)\). Define \(\Psi^{-1}(\psi)\) as \(\Phi(T_\psi)\) (with \(\Phi\) as in Theorem~\ref{main theorem}). Since the Laurent series of \(\Psi\bigl(\Phi(T_\psi)\bigr)\) near \((\infty,\ldots,\infty)\) is determined by the multiplier sequence of \(\Phi(T_\psi)\), we obtain \(\Psi\bigl(\Phi(T_\psi)\bigr)=\psi\).
Multipliers form an algebra under composition, and this operation corresponds to the pointwise multiplication of multiplier sequences. Consequently, within \(H_0((\infty,\ldots,\infty),\mathcal{O}_\Omega)\), multiplication is equivalent to the multiplication of moments, which reflects a coordinatewise multiplication of the Laurent coefficients.
\end{proof}

We now proceed to study the topological aspects of Theorem 3.1. Specifically, we investigate the topology on $\mathcal{F}(\Omega)$ under which the map $\Phi:\mathcal{F}(\Omega)\rightarrow \mathsf{M}(\Omega)$ becomes a topological isomorphism. Here, $\mathsf{M}(\Omega)$ is equipped with the topology of uniform convergence on bounded sets inherited from $\mathcal{L}(H(\Omega))$. Our methods are based on those presented in \cite{T1}. In some places, instead of giving a full explanation, we shall refer the Reader to \cite{T1}.

From now on, assume that the topology on \(H(\Omega)\) is given by the family of seminorms
\[
\{\lVert\cdot\rVert_K:K\in\mathcal{K}\},
\]
where \(\mathcal{K}\) is a fundamental family of compact subsets of \(\Omega\). Every \(K\in\mathcal{K}\) is of the form 
\[
K=K_1\times\cdots\times K_n,
\]
with every \(K_j\subset\mathbb{C}\) polynomially convex (i.e., \(\mathbb{C}\setminus K_j\) is connected). Additionally, we assume that \(1\in K\) and, if \(0\in\Omega\), then \(0\in\operatorname{int}K\).

Fix a compact $K\in\mathcal{K}.$ Define:
$$\mathcal{F}(\Omega,K):=\big{\{}T\in H(\mathbb{C}^n)'|\ T\textup{ has a carrier in }z^{-1}\Omega\textup{ for every }z\in K\setminus\mathcal{N}\big{\}},$$
\begin{multline*}
H_0(\Omega,K):= \big{\{}f\in H_0(\{(\infty,\ldots,\infty)\})|\ f\textup{ extends on a neighborhood of } \\(\hat{\mathbb{C}}\setminus z_1^{-1}\Omega_1)\times\ldots\times (\hat{\mathbb{C}}\setminus z_n^{-1}\Omega_n) \textup{ for }z\in K\setminus\mathcal{N}\big{\}},
\end{multline*}
$$\mathsf{MC}(\Omega,K):=\big{\{}M\in\mathcal{L}\big{(}H(\Omega),\mathcal{C}(K)\big{)}|\ M\textup{ admits }  \\
\textup{all monomials as eigenvectors}\big{\}},$$
$$\mathsf{M}(\Omega,K):=\big{\{}M\in\mathcal{L}\big{(}H(\Omega),H(K)\big{)}|\ M\textup{ admits }  \\
\textup{all monomials as eigenvectors}\big{\}}.$$
The embedding $\iota:\mathsf{M}(\Omega,K)\hookrightarrow\mathsf{MC}(\Omega,K)$ is continuous, and as sets, we have \(\mathsf{M}(\Omega,K)=\mathsf{MC}(\Omega,K)\) (see the proof of \cite[Proposition 4.1]{T1}).

Before we formulate the last main result in this paper, we shall define a topology on $\mathcal{F}(\Omega,K)$.

Suppose $V=V_1\times\ldots\times V_n$, where every $V_j\subset\mathbb{C}$ is a nonempty, polynomially convex open set for $j=1,\ldots,n$. For any positive null-sequence $\delta=\big{(}\delta_k\big{)}_{k\in\mathbb{N}},$ i.e., $\delta_k\rightarrow 0^+,$ and  $f\in H((\hat{\mathbb{C}}\setminus V_1)\times\ldots\times(\hat{\mathbb{C}}\setminus V_n))$, define 
\begin{equation}\label{5.8}
|f|_{V,\delta}:=\max\Big{\{}\sup_{z\in \partial_0 V,\,\alpha\in\mathbb{N}^n}\frac{|D^\alpha f(z)|}{\alpha!}\delta_{(|\alpha|)},\sup_{\alpha\in\mathbb{N}^n}\ \frac{|D^\alpha(f\circ 1/\zeta)(0)|}{\alpha!}\delta_{(|\alpha|)}\Big{\}},
\end{equation}
where $\delta_{(k)}:=\delta_0\delta_1\cdot\ldots\cdot\delta_{k}.$ 

A straightforward computation shows that the function
\begin{equation}\label{5.9}
\upsilon_{K,\delta}(f):=\sup_{z\in K\setminus\mathcal{N}}\,\big{|}f_{(z)}\big{|}_{z^{-1}\Omega,\delta},\ \ f\in H_0(\Omega,K)
\end{equation}
defines a seminorm on $H_0(\Omega,K)$. Here,  $f_{(z)}\in H_0((\hat{\mathbb{C}}\setminus z_1^{-1}\Omega_1)\times\ldots\times (\hat{\mathbb{C}}\setminus z_n^{-1}\Omega_n))$ denotes an extension of $f$ to a neighborhood of $(\hat{\mathbb{C}}\setminus z_1^{-1}\Omega_1)\times\ldots\times (\hat{\mathbb{C}}\setminus z_n^{-1}\Omega_n)$. Hence, there exists a unique locally convex topology on $H_0(\Omega,K)$ for which the family of seminorms
$$\big{\{}\upsilon_{K,\delta}|\,\delta\,\textup{ is a strictly positive null-sequence}\big{\}}$$
is a fundamental system of seminorms. $H_0(\Omega,K)$ endowed with this topology is a DF-space. 
 In particular, $H_0(\Omega,K)$ has a web, is barrelled, and ultra-bornological. 
 
 Finally, for $T\in \mathcal{F}(\Omega,K)$, define
 \begin{equation}\label{5.11}
 |T|_{K,\delta}:=\upsilon_{K,\delta}(f_T)
 \end{equation}
 ($f_T$ is given by the K\"{o}the Grothendieck Tillmann duality). We equip $\mathcal{F}(\Omega,K)$ with the locally convex topology given by the system of seminorms $\big{\{}|\ \,|_{K,\delta}\,|\,\delta\textup{ is a strictly positive null-sequence}\big{\}}.$ We denote this topology as $\tau_{\Omega,K}.$

\begin{proposition}\label{Propozycja 5.1}
The map $\Phi:T\rightarrow M_T$ \textup{(}as in Theorem 3.1) defines a topological isomorphism between $\mathcal{F}(\Omega,K)$ and $\mathsf{M}(\Omega,K).$ Its inverse map is 
$\Theta:M\mapsto T_M,$ where $T_M(f)=M(f)(\mathbf{1}_n).$
Both maps send equicontinuous sets into  equicontinuous sets.

\end{proposition}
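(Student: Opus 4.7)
The plan is to decompose the argument into three stages: first, checking that $\Phi$ restricts to a bijection $\mathcal{F}(\Omega,K)\to\mathsf{M}(\Omega,K)$; second, proving mutual continuity estimates between the seminorms $|\cdot|_{K,\delta}$ on one side and the operator seminorms inherited from $\mathcal{L}(H(\Omega),H(K))$ on the other; third, deducing preservation of equicontinuous sets from the quantitative form of these estimates. For the first stage, I would localize the proof of Theorem~\ref{main theorem}: the inclusion $\Phi(\mathcal{F}(\Omega,K))\subset\mathsf{M}(\Omega,K)$ follows by rerunning the proof of Proposition~3.2 with $z$ restricted to $K$, the patching via Cauchy's formula on $\partial_0\mathbb{P}_n(z,2r_z)$ at points of $K\cap\mathcal{N}$ remaining valid after shrinking $r_z$ if necessary. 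The reverse inclusion uses, for $M\in\mathsf{M}(\Omega,K)$, the identity $M(f)(\mathbf{1}_n)=M(f_{z^{-1}})(z)$ combined with the continuity $M:H(\Omega)\to H(K)$ to reproduce estimate~(3.9) for every $z\in K\setminus\mathcal{N}$, placing $T_M$ in $\mathcal{F}(\Omega,K)$.

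For the continuity of $\Phi$, I would invoke the integral representation from Theorem~\ref{Theorem 4.1},
\[
\Phi(T)(f)(z)=\frac{1}{(2\pi i)^n}\int_{\gamma_1\times\cdots\times\gamma_n} f_z(\zeta)\,(f_T)_{(z)}(\zeta)\,d\zeta,
\]
where the curves $\gamma_j$ separate $\hat{\mathbb{C}}\setminus z_j^{-1}\Omega_j$ from a compact subset of $z_j^{-1}\Omega_j$, chosen uniformly for $z$ in a compact neighbourhood of $K\setminus\mathcal{N}$. Estimating the integrand pointwise through $\upsilon_{K,\delta}(f_T)$ and $\|f\|_{L'}$ for an $L'\Subset\Omega$ determined by the chosen cycles, taking the supremum over $z$ in a compact $L\supset K$ yields
\[
\sup_{z\in L}|\Phi(T)(f)(z)|\le C_{K,L}\,\upsilon_{K,\delta}(f_T)\,\|f\|_{L'},
\]
which is precisely the type of bound needed to deduce continuity of $\Phi$ into $\mathsf{M}(\Omega,K)\hookrightarrow\mathcal{L}(H(\Omega),H(K))$ with the topology of uniform convergence on bounded sets.

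The continuity of $\Theta$ rests on the explicit formula $f_{T_M}(\zeta)=M(k_\zeta)(\mathbf{1}_n)$, where $k_\zeta(w)=\prod_{j=1}^{n}(\zeta_j-w_j)^{-1}$. Differentiation in $\zeta$ under $M$ is justified because $\zeta\mapsto k_\zeta$ is holomorphic into $H(\Omega)$ off $(\hat{\mathbb{C}}\setminus\Omega_1)\times\cdots\times(\hat{\mathbb{C}}\setminus\Omega_n)$; combined with the dilation identity used to shift the evaluation from $\mathbf{1}_n$ to a general $z\in K\setminus\mathcal{N}$, this reduces the supremum defining $\upsilon_{K,\delta}(f_{T_M})$ to a supremum of $|M(h_{\zeta,z,\alpha})(z)|$ over a family $\{h_{\zeta,z,\alpha}\}$ parametrised by $z\in K\setminus\mathcal{N}$, $\zeta\in\partial_0 z^{-1}\Omega$, and $\alpha\in\mathbb{N}^n$. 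Choosing the null-sequence $\delta$ so that $\delta_{(|\alpha|)}$ absorbs the Cauchy-type denominators $\mathrm{dist}(\zeta,z^{-1}\Omega)^{-|\alpha|-n}$ and the factorials $\alpha!^{-1}$, this family becomes uniformly bounded in $H(\Omega)$, giving $|T_M|_{K,\delta}\le C\,\sup_{f\in B}\|M(f)\|_{L}$ for an appropriate bounded $B\subset H(\Omega)$ and compact $L\supset K$.

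The main obstacle is the coordinated choice of $\delta$ and of the cycles $\gamma_j$: $\delta$ must be tuned simultaneously so that $\delta_{(|\alpha|)}$ dominates the geometric factors produced by the integral representation of $\Phi(T)$ and by the $\zeta$-derivatives of $k_\zeta$ entering $\Theta$, uniformly as $z$ ranges over $K\setminus\mathcal{N}$. This is exactly the tuning carried out in one variable in \cite[Proposition~4.1]{T1}, and the multidimensional version should follow coordinatewise. Once the two quantitative estimates are established, preservation of equicontinuous sets is immediate: an equicontinuous family in $\mathsf{M}(\Omega,K)$ is controlled by a single operator-seminorm, which under $\Theta$ corresponds to a uniform $|\cdot|_{K,\delta}$-bound and hence to equicontinuity of the corresponding functionals on $H(\mathbb{C}^n)$, and symmetrically for $\Phi$.
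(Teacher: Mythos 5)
Your Stage 3 (continuity of $\Theta$) is essentially the paper's argument: one rewrites $D^\alpha (f_{T_M})_{(z)}(w)$ and the derivatives at infinity as values $M(h)(z)$ for explicit kernels $h$, and checks that the resulting families are bounded in $H(\Omega)$ resp.\ $H(\mathbb{C}^n)$. One caveat there: you speak of \emph{choosing} the null-sequence $\delta$ so that $\delta_{(|\alpha|)}$ absorbs the Cauchy denominators; since the topology of $\mathcal{F}(\Omega,K)$ is generated by \emph{all} seminorms $|\cdot|_{K,\delta}$, you must carry out the estimate for every given $\delta$. This is reparable, because $\delta_{(k)}=\delta_0\cdots\delta_k$ decays faster than any geometric sequence for \emph{every} null-sequence, which is exactly how the paper bounds the families $\mathcal{S}$ and $\mathcal{B}$.

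The genuine gap is in Stage 2, the continuity of $\Phi$. The estimate
$\sup_{z\in L}|\Phi(T)(f)(z)|\le C_{K,L}\,\upsilon_{K,\delta}(f_T)\,\lVert f\rVert_{L'}$
obtained by "estimating the integrand pointwise" cannot be established this way, for two reasons. First, the cycles $\gamma_j$ must lie inside the open set on which the extension $(f_T)_{(z)}$ exists, and that neighbourhood of $(\hat{\mathbb{C}}\setminus z_j^{-1}\Omega_j)$ depends on $T$ and can be arbitrarily thin; so the cycles cannot be "chosen uniformly" over $\mathcal{F}(\Omega,K)$, and the constant you produce depends on $T$. Second, a single seminorm $\upsilon_{K,\delta}$ does not control $\lVert (f_T)_{(z)}\rVert_{\gamma_1\times\cdots\times\gamma_n}$ for cycles at positive distance from $\partial_0(z^{-1}\Omega)$: it only gives $|D^\alpha (f_T)_{(z)}(w)|/\alpha!\le \upsilon_{K,\delta}(f_T)/\delta_{(|\alpha|)}$ at boundary points, and since $(\delta_{(k)})^{1/k}\to 0$ the resulting Taylor majorant has radius of convergence zero, so interior values are not controlled. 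The same conflation appears in your last paragraph, where a uniform bound in one seminorm $|\cdot|_{K,\delta}$ is treated as equicontinuity of the functionals. The paper avoids all of this by arguing softly: an equicontinuous subset of $\mathcal{F}(\Omega,K)$ admits a common carrier estimate $|Tf|\le C\lVert f\rVert_{z^{-1}L}$ uniformly in $z\in K\setminus\mathcal{N}$, whence $\lVert M_Tf\rVert_K\le C\lVert f\rVert_L$, so equicontinuous sets go to equicontinuous sets; then boundedness of $\mathcal{F}(\Omega,K)$'s bounded sets being equicontinuous (DF/barrelled structure of $H_0(\Omega,K)$) plus bornologicity of $\mathcal{F}(\Omega,K)$ yields continuity of $\Phi$ without any direct single-seminorm estimate. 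You need to replace your Stage 2 by an argument of this kind.
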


By going to the projective limit over $K\in\mathcal{K}$, as a direct consequence of Proposition \ref{Propozycja 5.1} we obtain: 
\begin{thm}\label{twierdzenie topologiczne}For every open set $\Omega=\Omega_1\times\ldots\times\Omega_n\subset\mathbb{C}^n,\,\Omega_j\subset\mathbb{C},\,j=1,\ldots,n$ the mapping
$$\Phi:\big{(}\mathcal{F}(\Omega),\textup{proj}_{K\Subset\Omega}\,(\mathcal{F}(\Omega,K),\tau_{\Omega,K})\big{)}\longrightarrow\big{(}\mathsf{M}(\Omega),\tau_b\big{)}$$
is a topological isomorphism.
\end{thm}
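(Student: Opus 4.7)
The plan is to realise both sides as projective limits over $K\in\mathcal{K}$ of the spaces handled by Proposition \ref{Propozycja 5.1}, and then invoke functoriality of projective limits in the category of locally convex spaces.

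First, I would verify the set-theoretic equality $\mathcal{F}(\Omega)=\bigcap_{K\in\mathcal{K}}\mathcal{F}(\Omega,K)$: the inclusion $\subset$ is immediate, and $\supset$ holds because every $z\in\Omega\setminus\mathcal{N}$ lies in some $K\in\mathcal{K}$. The transition maps $\mathcal{F}(\Omega,L)\hookrightarrow\mathcal{F}(\Omega,K)$ (for $K\subset L$) are continuous, since the seminorms in \eqref{5.11} only test over $K\setminus\mathcal{N}\subset L\setminus\mathcal{N}$; hence $\textup{proj}_{K\in\mathcal{K}}(\mathcal{F}(\Omega,K),\tau_{\Omega,K})$ induces a well-defined topology on $\mathcal{F}(\Omega)$.

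Next, I would check that $(\mathsf{M}(\Omega),\tau_b)$ is likewise the projective limit of $(\mathsf{M}(\Omega,K),\tau_b)$ via the restriction maps $q_K:\mathsf{M}(\Omega)\to\mathsf{M}(\Omega,K)$ given by $q_K(M)(f):=M(f)|_K$. Continuity of each $q_K$ is immediate; conversely, the basic $\tau_b$-neighborhoods $\{M:\sup_{f\in B}\lVert Mf\rVert_K<\varepsilon\}$ on $\mathsf{M}(\Omega)$ are exactly the preimages under $q_K$ of the corresponding neighborhoods on $\mathsf{M}(\Omega,K)$, and any compatible family $(M_K)_K$ glues to a unique $M\in\mathsf{M}(\Omega)$ because $H(\Omega)=\textup{proj}_{K\in\mathcal{K}}H(K)$ topologically. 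The commutativity $q_K\circ\Phi=\Phi_K$ is immediate from the pointwise formula $\Phi(T)(f)(z)=T_z(f_z)$ of Theorem \ref{main theorem}, so $\Phi$ is assembled from the level-$K$ topological isomorphisms of Proposition \ref{Propozycja 5.1} and is itself a topological isomorphism, with inverse $\Theta$ assembled from $\Theta_K:M\mapsto M(\cdot)(\mathbf{1}_n)$.

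The step I expect to be the main obstacle is the clean verification that $\tau_b$ on $\mathsf{M}(\Omega)$ really coincides with the initial topology induced by the $q_K$: continuity of each $q_K$ is formal, but the matching of basic neighborhoods invokes the characterization of bounded sets of $H(\Omega)$ as those uniformly bounded on every compact of $\Omega$ --- a standard input but one that has to be set down explicitly. Once this is in hand, the theorem follows with no further analysis from Proposition \ref{Propozycja 5.1} and the universal property of projective limits.
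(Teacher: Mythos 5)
Your proposal is correct and follows exactly the route the paper takes: the paper derives this theorem in one line, "by going to the projective limit over $K\in\mathcal{K}$," from Proposition \ref{Propozycja 5.1}, and your argument is simply a careful elaboration of that passage (identifying both sides as projective limits, checking the transition maps, and verifying that $\tau_b$ is the initial topology induced by the restrictions $q_K$ via the standard characterization of bounded sets in $H(\Omega)$). The obstacle you flag is real but routine, and your treatment of it supplies detail the paper omits rather than deviating from its method.
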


We are left to demonstrate the truth of Proposition \ref{Propozycja 5.1}.

\begin{proof}[Proof of Proposition \ref{Propozycja 5.1}]
First, we show that the map \(T\mapsto M_T\) sends equicontinuous sets into equicontinuous sets. From the proof of Theorem~\ref{main theorem}, we may assume that there exists a compact set \(L\subset\Omega\) and a constant \(C>0\) such that
\[
|Tf|\leq C\lVert f\rVert_{z^{-1}L}\quad \text{for every } z\in K\setminus\mathcal{N} \text{ and } f\in H(\mathbb{C}^n).
\] Consequently, 
$$\lVert M_Tf\rVert_{K}=\sup_{z\in K}\,|T_z(f_z)|\leq C\lVert f_z\rVert_{z^{-1}L}=C\lVert f\rVert_L.$$
Thus, \(T\mapsto M_T\) maps equicontinuous subsets of \(\mathcal{F}(\Omega,K)\) into equicontinuous (hence bounded) subsets of \(\mathsf{M}(\Omega,K)\). Since \(\mathsf{M}(\Omega,K)\) is barreled, every bounded set is equicontinuous, and hence \(T\mapsto M_T\) sends bounded sets into bounded sets. As \(\mathcal{F}(\Omega,K)\) is bornological, the map \(T\mapsto M_T\) is continuous from \(\mathcal{F}(\Omega,K)\) to \(\mathsf{M}(\Omega,K)\).
   
Now, suppose that \(\mathcal{M}\subset\mathsf{MC}(\Omega,K)\) is a nonempty family such that
$\lVert Mf\rVert_K\leq C\lVert f\rVert_U$ for some constant $C>0$ and an open set $U\Subset\Omega.$ Notice that
$$|T_Mf|=|M(f)(\mathbf{1}_n)|=|M(f_{z^{-1}})(z)|\leq C\lVert f_{z^{-1}}\rVert_U=C\lVert f\rVert_{z^{-1}U}$$
for all $f\in H(\mathbb{C}^n).$ 
This shows that the set \(\{T_M: M\in\mathcal{M}\}\) is equicontinuous in \(\mathcal{F}(\Omega,K)\).

To prove the continuity of $\Theta:M\mapsto T_M$, we must estimate the derivatives  of $f_T\in H_0(\Omega,K)$. Fix a positive null-sequence $\delta.$ For any $T\in\mathcal{F}(\Omega,K)$ we have:
\begin{multline*}
\sup_{\alpha\in\mathbb{N}^n}\,\frac{\big{|}D^\alpha\big{(}(f_T)_{(z)}\circ 1/\zeta\big{)}(0)\big{|}}{\alpha!}\,\delta_{(|\alpha|)}=\sup_{\alpha\in\mathbb{N}_*^n}\,|T(\zeta^{\alpha-\mathbf{1}_n})|\delta_{(|\alpha|)} \\
=\sup_{\alpha\in\mathbb{N}^n_*}\,|M_T\big{(}\delta_{(|\alpha|)}\zeta^{\alpha-\mathbf{1}_n}\big{)}(\mathbf{1}_n)|\leq\sup_{h\in\mathcal{S},\,w\in K}\,|M_T(h)(w)|,
\end{multline*}
where
$$\mathcal{S}:=\Big{\{}h_\alpha:\zeta\rightarrow\delta_{(|\alpha|+n)}\zeta^\alpha\big{|}\,\alpha\in\mathbb{N}^n\Big{\}}.$$
As $\delta_k\rightarrow 0^+$, for each compact $S$, there exists a constant $C>0$ such that $\delta_{(k)}\leq C\lVert\zeta\rVert_S^k.$ Therefore, $\mathcal{S}$ is bounded in $H(\mathbb{C}^n).$ 

The remaining part of $|(f_T)_{(z)}|_\delta$ we estimate as follows:
\begin{multline*}
\sup_{\alpha\in\mathbb{N}^n,\,w\in\partial_0(z^{-1}\Omega)}\,\frac{| D^\alpha(f_T)_{(z)}(w)|}{\alpha!}\,\delta_{(|\alpha|)}=\sup_{\alpha\in\mathbb{N}^n,\,w\in\partial_0(z^{-1}\Omega)}\,\big{|}\,_\zeta T_z\big{(}\frac{1}{(w-\zeta)^{\alpha+\mathbf{1}_n}}\big{)}\big{|}\,\delta_{(|\alpha|)} \\
=\sup_{\alpha\in\mathbb{N}^n,\,w\in\partial_0(z^{-1}\Omega)}\,\big{|}M_T\Big{(}\frac{\delta_{(|\alpha|)}}{(w-\frac{\zeta}{z})^{\alpha+\mathbf{1}_n}}\Big{)}(z)\big{|}\leq\sup_{h\in\mathcal{B},\,w\in K}\,\big{|}M_T(h)(w)\big{|},
\end{multline*}
where 
$$\mathcal{B}:=\Big{\{}h:\zeta\rightarrow\frac{\delta_{(|\alpha|)}}{(w-\frac{\zeta}{z})^{\alpha+\mathbf{1}_n}}\big{|}\,z\in K\setminus\mathcal{N},\,w\in\partial_0(z^{-1}\Omega),\,\alpha\in\mathbb{N}^n\Big{\}}.$$
Note that $\mathcal{B}$ is bounded in $H(\Omega)$ because if $J\Subset\Omega$, then
$$\sup_{\eta\in J,\,w\in\partial_0(z^{-1}\Omega),\,z\in K\setminus\mathcal{N},\,j=1,\ldots,n}\,|w_j-\eta_j/z_j|^{-1}\leq\textup{dist}^{-1}(J,\partial\Omega)\,\lVert\zeta\rVert_{K}<+\infty.$$

\end{proof}

{\bf Data Availability Statement.} All the relevant data have been included in the paper.

\vspace{0.5cm}
{\bf Declarations}

\vspace{0.1cm}
{\bf Conflict of interest.} The author states that there is no conflict of interest.

{\bf Contributions.} I am the only author of all statements contained in the paper.

\end{document}